\newcommand{\rec}{\mathrm{rec\,}}
\newcommand{\argmin}{\mathop{\rm argmin}}
\newcommand{\norm}[1]{\left\lVert#1\right\rVert}
\newcommand{\mnorm}[1]{{\left\vert\kern-0.25ex\left\vert\kern-0.25ex\left\vert #1 
    \right\vert\kern-0.25ex\right\vert\kern-0.25ex\right\vert}}
\newtheorem{theorem}{Theorem}
\newtheorem{lemma}{Lemma}
\newtheorem{remark}{Remark}
\newtheorem{assumption}{Assumption}
\newcommand{\ie}{{\it i.e.}}
\def\BibTeX{{\rm B\kern-.05em{\sc i\kern-.025em b}\kern-.08em
    T\kern-.1667em\lower.7ex\hbox{E}\kern-.125emX}}
\begin{document}

\title{Extrapolated Proportional-Integral Projected Gradient Method for Conic Optimization}
\author{Yue~Yu, Purnanand~Elango, Beh\c{c}et~A\c{c}\i kme\c{s}e, and Ufuk Topcu
\thanks{Y. Yu and U. Topcu are with the Oden Institude of Computational Sciences and Engineering, The University of Texas at Austin, TX, 78712, USA (emails:  yueyu@utexas.edu,\,utopcu@utexas.edu). P. Elango and B. A\c{c}\i kme\c{s}e are with the Department of Aeronautics and Astronautics, University of Washington, Seattle, WA, 98195, USA (emails: pelango@uw.edu,\,behcet@uw.edu).}}

\maketitle
\thispagestyle{empty}
\pagestyle{empty}

%%%%%%%%%%%%%%%%%%%%%%%%%%%%%%%%%%%%%%%%%%%%%%%%%%%%%%%%%%%%%%%%%%%%%%%%%%%%%%%%
\begin{abstract}
Conic optimization is the minimization of a convex quadratic function subject to conic constraints. We introduce a novel first-order method for conic optimization, named \emph{extrapolated proportional-integral projected gradient method (xPIPG)}, that automatically detects infeasibility. The iterates of xPIPG either asymptotically satisfy a set of primal-dual optimality conditions, or generate a proof of primal or dual infeasibility. We demonstrate the application of xPIPG using benchmark problems in model predictive control. xPIPG outperforms many state-of-the-art conic optimization solvers, especially when solving large-scale problems.
\end{abstract}

\begin{IEEEkeywords}
Optimization algorithms, infeasibility detection
\end{IEEEkeywords}

%%%%%%%%%%%%%%%%%%%%%%%%%%%%%%%%%%%%%%%%%%%%%%%%%%%%%%%%%%%%%%%%%%%%%%%%%%%%%%%%

\section{Introduction}
\label{sec: introduction}
\IEEEPARstart{C}{onic} optimization is the minimization of a quadratic function subject to conic constraints:
\begin{equation}\label{opt: conic}
\begin{array}{ll}
\underset{z}{\mbox{minimize}} & \frac{1}{2}z^\top P z+q^\top z\\
\mbox{subject to} & Hz-g\in\mathbb{K}, \enskip z\in\mathbb{D},
\end{array}
\end{equation}
where \(P\in\mathbb{R}^{n\times n}\) is a symmetric positive semidefinite matrix, \(H\in\mathbb{R}^{m\times n}\), \(g\in\mathbb{R}^m\), \(\mathbb{K}\subset\mathbb{R}^m\) is a closed convex cone, and \(\mathbb{D}\subset\mathbb{R}^n\) is a closed convex set.

An important subproblem in conic optimization is to detect whether optimization~\eqref{opt: conic} is primal or dual infeasible  \cite{o2016conic,banjac2019infeasibility,banjac2021asymptotic,o2021operator,yu2021infeasibility,applegate2021infeasibility}. Optimization \eqref{opt: conic} is \emph{primal infeasible} if there exists \(\overline{w}\in\mathbb{R}^m\) such that
\begin{equation}\label{eqn: primal inf}
\underset{z\in\mathbb{D}}{\inf} \,\langle Hz-g, \overline{w}\rangle>\underset{y\in\mathbb{K}}{\sup}\,\langle y, \overline{w}\rangle.
\end{equation}
Here the vector \(\overline{w}\) is known as a \emph{proof of primal infeasibility}; it defines a hyperplane separating the set \(\{Hz-g\,|\,z\in\mathbb{D}\}\) from the cone \(\mathbb{K}\). On the other hand, optimization \eqref{opt: conic} is \emph{dual infeasible} if there exists \(\overline{z}\in\mathbb{R}^n\) such that the following conditions hold for all \(z\in\mathbb{D}\): 
\begin{equation}\label{eqn: dual inf}
P\overline{z}=0, \enskip q^\top \overline{z}<0, \enskip H\overline{z}\in\mathbb{K}, \enskip z+\overline{z}\in\mathbb{D}.
\end{equation}
Here the vector \(\overline{z}\) is known as a \emph{proof of dual infeasibility}; it defines a direction along which the value of the objective function of optimization~\eqref{opt: conic} can decrease indefinitely. We refer the interested readers to \cite[Prop. 3.1]{banjac2019infeasibility} and \cite[Prop. 4.1]{banjac2021minimal} for details on the infeasibility conditions in \eqref{eqn: primal inf} and \eqref{eqn: dual inf}. Infeasibility detection is the problem of computing, if possible, a proof of primal infeasibility or dual infeasibility \cite{banjac2019infeasibility,banjac2021asymptotic,yu2021infeasibility}. Such computation is necessary in both quasiconvex and mixed-integer optimization \cite{yu2021infeasibility}.

Traditional conic optimization methods detect infeasibility by computing matrix inverse (or equivalently, solving system of linear equations), usually as a subroutine of the interior-point method \cite{nesterov1999infeasible} or the Douglas-Rachford-splitting method \cite{raghunathan2014infeasibility,o2016conic,liu2019new,ryu2019douglas,banjac2019infeasibility,o2021operator}. Such computation is numerically expensive for large-scale problems.

Proportional-integral projected gradient method (PIPG) is the first method that detects infeasibility in general conic optimization without solving systems of linear equations \footnote{Another method introduced in \cite{applegate2021infeasibility} also avoid solving systems of linear equations. But this method only applies to linear programs.}  \cite{yu2021infeasibility}. If the conic optimization is feasible, PIPG also enjoys the best convergence rates among first-order primal-dual methods. For a detailed comparison between PIPG and other first-order methods, see \cite{chambolle2016ergodic,yu2020proportional,yu2021proportional}.

By adding an extrapolation step to PIPG, we introduce a novel conic optimization method, named \emph{extrapolated PIPG} (xPIPG), that automatically detects infeasibility. Previous results only showed the ergodic convergence of the primal-dual gap function for a variant of xPIPG  \cite{chambolle2016ergodic}. We prove that the iterates of xPIPG either asymptotically satisfy a set of primal-dual optimality conditions, or generates a proof of primal or dual infeasibility.

We demonstrate the application of xPIPG using the optimal control problem of oscillating masses, a popular benchmark problem in model predictive control \cite{wang2009fast,jerez2014embedded,yu2021proportional}. Empirically, xPIPG is about twice as fast as PIPG. With an efficient implementation in C language, xPIPG outperforms many state-of-the-art solvers, especially when solving large-scale problems. 

%The rest of the paper is organized as follows. After some preliminaries and background in Section~\ref{sec: preliminary}, we introduce xPIPG together with its convergence results in Section~\ref{sec: method}. We demonstrate the application of xPIPG in optimal control problems in Section~\ref{sec: numerical}, before concluding and commenting on future work directions in Section~\ref{sec: conclusions}.

\section{Notation and preliminaries}
\label{sec: preliminary}

We let \(\mathbb{N}\), \(\mathbb{R}\) and \(\mathbb{R}_+\) denote the set of positive integers, real numbers, and non-negative real numbers, respectively. For two vectors \(z, z'\in\mathbb{R}^n\) and a positive definite matrix \(M\), we let \(\langle z, z'\rangle\) denote the inner product of \(z\) and \(z'\), and \(\langle z, z'\rangle_M\) denote the inner product of \(z\) and \(z'\) weighted by matrix \(M\); we let \(\norm{z}\coloneqq \sqrt{\langle z, z\rangle}\) denote the \(\ell_2\)-norm of \(z\), and \(\norm{z}_M\coloneqq \sqrt{\langle z, Mz\rangle}\) denote the norm of \(z\) weighted by matrix \(M\). We let \(1_n\) denote the \(n\)-dimensional vector of all 1's, \(0_n\) denote the \(n\)-dimensional zero vector, and \(I_n\) denote the \(n\times n\) identity matrix. For a matrix \(H\in\mathbb{R}^{m\times n}\), we let \(H^\top\) denote its transpose, \(\norm{H}\) denote its largest singular value. A set \(\mathbb{D}\subseteq\mathbb{R}^n\) is convex if \(\alpha z+(1-\alpha)z'\in\mathbb{D}\) for any \(\alpha\in[0, 1]\) and \(z, z'\in\mathbb{D}\). A set \(\mathbb{K}\subseteq\mathbb{R}^m\) is a convex cone if \(\mathbb{K}\) is convex and \(\alpha w\in\mathbb{K}\) for any \(w\in\mathbb{K}\) and \(\alpha\in\mathbb{R}_+\).

 Let \(\mathbb{D}\subseteq\mathbb{R}^n\) be a nonempty closed convex set. 
 The \emph{point-to-set distance} from \(z\in\mathbb{R}^n\) to set \(\mathbb{D}\) is given by
\begin{equation}\label{eqn: distance func}
    d(z\,|\,\mathbb{D})\coloneqq \underset{z'\in\mathbb{D}}{\min} \norm{z-z'}.
\end{equation}
 The \emph{projection} of \(z\) onto set \(\mathbb{D}\) is given by
\begin{equation}\label{eqn: projection}
\pi_{\mathbb{D}}[z]\coloneqq \underset{z'\in\mathbb{D}}{\argmin} \norm{z-z'}.
\end{equation}
The \emph{normal cone} of set \(\mathbb{D}\) at \(z\) is given by
\begin{equation}\label{eqn: normal cone}
    N_{\mathbb{D}}(z)\coloneqq\{y\in\mathbb{R}^n\,|\,\langle y, z'-z\rangle\leq 0, \forall z'\in\mathbb{D}\}.
\end{equation}
The \emph{recession cone} of set \(\mathbb{D}\) is a given by
\begin{equation}\label{eqn: rec cone}
    \rec \mathbb{D}\coloneqq \{y\in\mathbb{R}^n\,|\, y+z\in\mathbb{D}, \forall\, z\in\mathbb{D}\}.
\end{equation}
The \emph{support function} of set \(\mathbb{D}\) is given by
\begin{equation}\label{eqn: support}
    \sigma_{\mathbb{D}}(z)\coloneqq \sup_{y\in\mathbb{D}}\,\langle y, z\rangle
\end{equation}
for all \(z\in\mathbb{R}^n\). Let \(\mathbb{K}\subseteq\mathbb{R}^m\) be a nonempty closed convex cone. The recession cone of cone \(\mathbb{K}\) is itself, \ie, \(\rec \mathbb{K}=\mathbb{K}\) \cite[Cor. 6.50]{bauschke2017convex}.
The \emph{polar cone} of \(\mathbb{K}\) is  given by
\begin{equation}\label{eqn: polar cone}
    \mathbb{K}^\circ\coloneqq \{w\in\mathbb{R}^m\,|\, \langle w, y\rangle\leq 0, \forall\, y\in\mathbb{K}\}.
\end{equation}
Let \(M\in\mathbb{R}^{p\times p}\) be a symmetric positive definite matrix. A function \(T:\mathbb{R}^{p}\to\mathbb{R}^p\) is a \emph{\(\gamma\)-averaged operator} for some \(\gamma\in(0, 1)\) with respect to norm \(\norm{\cdot}_M\) if and only if the following condition holds for any \(\zeta_1, \zeta_2\in\mathbb{R}^p\) \cite[Prop. 4.35]{bauschke2017convex}:
\begin{equation}\label{def: avg op}
\begin{aligned}
   &\norm{T(\zeta_1)-T(\zeta_2)}_M^2-\norm{\zeta_1-\zeta_2}_M^2\\
   &\textstyle \leq \frac{\gamma-1}{\gamma}\norm{\zeta_1-\zeta_2-T(\zeta_1)+T(\zeta_2)}_M^2. 
\end{aligned}
\end{equation}

We will use the following results on averaged operators.

\begin{lemma}\cite[Lem. 5.1]{banjac2019infeasibility}
\label{lem: min dis}
Let \(M\in\mathbb{R}^{p\times p}\) be a symmetric positive definite matrix and \(T:\mathbb{R}^p\to\mathbb{R}^p\) be a \(\gamma\)-averaged operator for some \(\gamma\in(0, 1)\) with respect to norm \(\norm{\cdot}_M\). Let \(\zeta^1\in\mathbb{R}^p\) and \(\zeta^{j+1}\coloneqq T(\zeta^j)\) for all \(j\in\mathbb{N}\). Then there exists \(\overline{\zeta}\in\mathbb{R}^p\), known as the \emph{minimal-displacement vector} of operator \(T\), such that \(\textstyle \lim\limits_{j\to\infty} \frac{\zeta^j}{j}=\lim\limits_{j\to\infty}\zeta^{j+1}-\zeta^j=\overline{\zeta}\).
\end{lemma}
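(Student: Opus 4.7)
The plan is to use the averaged-operator inequality \eqref{def: avg op} to first establish that the displacement sequence $\Delta^j\coloneqq \zeta^{j+1}-\zeta^j$ has non-increasing $M$-norm and vanishing successive differences, then identify its unique cluster point, and finally transfer convergence to the Ces\`aro average $\zeta^j/j$.

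First I would instantiate \eqref{def: avg op} with $\zeta_1=\zeta^{j+1}$ and $\zeta_2=\zeta^j$; using $T(\zeta^j)=\zeta^{j+1}$ and $T(\zeta^{j+1})=\zeta^{j+2}$ this becomes
\[
\|\Delta^{j+1}\|_M^2-\|\Delta^j\|_M^2\le\frac{\gamma-1}{\gamma}\,\|\Delta^{j+1}-\Delta^j\|_M^2.
\]
Because $\gamma\in(0,1)$ forces $(\gamma-1)/\gamma<0$, the sequence $\{\|\Delta^j\|_M\}$ is non-increasing and hence converges to some $\delta\ge 0$. Telescoping from $j=1$ to $N$ yields $\sum_{j=1}^{N}\|\Delta^{j+1}-\Delta^j\|_M^2\le\frac{\gamma}{1-\gamma}\bigl(\|\Delta^1\|_M^2-\|\Delta^{N+1}\|_M^2\bigr)$, so the series is summable and in particular $\Delta^{j+1}-\Delta^j\to 0$ in $M$-norm.

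Next I would pin down the limit of $\{\Delta^j\}$. Each $\Delta^j$ lies in the set $S\coloneqq\mathrm{ran}(T-I)$, and applying \eqref{def: avg op} to $\zeta^j$ and an arbitrary point $\eta\in\mathbb{R}^p$ with $\xi\coloneqq T\eta-\eta$ shows that $\|\Delta^j-\xi\|_M$ is non-increasing in $j$; by a routine Fej\'er-monotonicity argument this forces every subsequential limit of $\{\Delta^j\}$ to coincide with the unique $M$-norm minimizer $\overline{\zeta}$ of the closure of $S$. Since $\{\Delta^j\}$ is bounded in $\mathbb{R}^p$ by the previous step, compactness plus uniqueness of cluster points gives $\Delta^j\to\overline{\zeta}$. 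This identification is the main obstacle, because the averaged inequality by itself only controls norms; it is the strict convexity of $\|\cdot\|_M$ combined with $\|\Delta^j\|_M\to\|\overline{\zeta}\|_M$ that upgrades the norm monotonicity into strong convergence of the displacements themselves.

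Finally, the telescoping identity $\zeta^{j+1}/j=\zeta^1/j+(1/j)\sum_{k=1}^{j}\Delta^k$, together with the classical fact that the Ces\`aro mean of a convergent sequence converges to the same limit, yields $\lim_{j\to\infty}\zeta^j/j=\overline{\zeta}$, which matches $\lim_{j\to\infty}(\zeta^{j+1}-\zeta^j)=\overline{\zeta}$ and completes the plan.
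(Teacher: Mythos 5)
The paper offers no proof of this lemma --- it is imported verbatim from \cite[Lem.~5.1]{banjac2019infeasibility} --- so any self-contained argument is a bonus. Your architecture (monotone displacement norms, identification of the limit with the minimal-$M$-norm element of $\overline{\mathrm{ran}(T-I)}$, Ces\`aro transfer) is the standard Pazy / Baillon--Bruck--Reich route, and your first and last steps are correct. The middle step, however, has a genuine gap: the claim that $\|\Delta^j-\xi\|_M$ is non-increasing in $j$ for a \emph{fixed} $\xi=T\eta-\eta$ does not follow from \eqref{def: avg op} and is false. Take $T(x)=x/2$ on $\mathbb{R}$ (a $\tfrac12$-averaged map), $\zeta^1=1$, $\eta=10$: then $\Delta^j=-2^{-j}$, $\xi=-5$, and $\|\Delta^j-\xi\|=5-2^{-j}$ is strictly increasing. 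What \eqref{def: avg op} with $\zeta_1=\zeta^j$, $\zeta_2=\eta$ actually yields is $\tfrac{1-\gamma}{\gamma}\|\Delta^j-\xi\|_M^2\le\|\zeta^j-\eta\|_M^2-\|\zeta^{j+1}-T\eta\|_M^2$, whose right-hand side telescopes only if you also advance $\eta$ along its own orbit. Doing so gives $\sum_j\|\Delta^j-(T-I)T^{j-1}\eta\|_M^2\le\tfrac{\gamma}{1-\gamma}\|\zeta^1-\eta\|_M^2$, and since $\|(T-I)T^{j}\eta\|_M$ is non-increasing in $j$, this is the inequality you actually need: $\lim_j\|\Delta^j\|_M\le\|(T-I)\eta\|_M$ for every $\eta$, hence $\lim_j\|\Delta^j\|_M=\inf\{\|s\|_M:\ s\in\overline{\mathrm{ran}(T-I)}\}$.

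Two further points are glossed over. First, for ``the unique $M$-norm minimizer of the closure of $S$'' to exist you must know $\overline{\mathrm{ran}(T-I)}$ is convex; this is not automatic and is usually obtained from the maximal monotonicity of $I-T$ in the $M$-inner product (the closure of the range of a maximal monotone operator is convex). Second, once convexity and $\|\Delta^j\|_M\to\|\overline{\zeta}\|_M=\min_{s\in\overline{S}}\|s\|_M$ are in hand, the clean finish is the parallelogram law rather than a cluster-point argument: since $\tfrac12(\Delta^j+\overline{\zeta})\in\overline{S}$ implies $\|\tfrac12(\Delta^j+\overline{\zeta})\|_M\ge\|\overline{\zeta}\|_M$, one gets $\|\tfrac12(\Delta^j-\overline{\zeta})\|_M^2\le\tfrac12\|\Delta^j\|_M^2-\tfrac12\|\overline{\zeta}\|_M^2\to0$, which gives $\Delta^j\to\overline{\zeta}$ directly. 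As written, your Fej\'er-monotonicity step is the load-bearing part of the identification and it does not hold, so the proposal is not yet a proof.
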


\section{Extrapolated proportional-integral projected gradient method}
\label{sec: method}

We introduce our main contribution, a conic optimization method that automatically detects infeasibility, in Algorithm~\ref{alg: PIPG}, where \(\alpha, \beta, \rho\in\mathbb{R}_+\) are positive step sizes, \(k\in\mathbb{N}\) is the maximum number of iterations, \(\epsilon\in\mathbb{R}_+\) is a small tolerance for infeasibility. 

We name Algorithm~\ref{alg: PIPG} the \emph{extrapolated proportional-integral projected gradient method} for the following reasons. First, if \(\rho=1\), then Algorithm~\ref{alg: PIPG} reduces to the proportional-integral projected gradient method (PIPG) \cite{chambolle2016ergodic,yu2020proportional,yu2021proportional}, which has been used in distributed optimization \cite{yu2020mass,yu2020rlc} and optimal control \cite{yu2020proportional,elango2022customised}. Second, lines~\ref{alg: relaxed z}--\ref{alg: relaxed w} define an interpolation step if \(\rho\in[0, 1]\) and an extrapolation step if \(\rho\in[1, 2]\); see Fig.~\ref{fig: extrapolation} for an illustration. Since extrapolation improves practical convergence \cite{eckstein1992douglas,chambolle2016ergodic}, we will let \(\rho\in[1, 2]\) in Algorithm~\ref{alg: PIPG}, hence the name ``extrapolated PIPG".

\begin{algorithm}[!ht]
\caption{xPIPG}
\begin{algorithmic}[1]
\Require \(k, \alpha, \beta, \rho,  \epsilon\), initial values  \(\xi^1, \eta^1\).
\For{\(j=1, 2, \ldots, k\)}\label{alg: pipg for start}
\State{\(z^{j+1}=\pi_{\mathbb{D}}[\xi^j-\alpha( P\xi^j+q+H^\top \eta^j)]\)}\label{alg: z}
\State{\(w^{j+1}= \pi_{\mathbb{K}^\circ}[\eta^j+\beta(H(2z^{j+1}-\xi^j)-g)]\)}\label{alg: w}
\State{\(\xi^{j+1}=(1-\rho)\xi^j+\rho z^{j+1}\)}\label{alg: relaxed z}
\State{\(\eta^{j+1}=(1-\rho)\eta^j+\rho w^{j+1}\)}\label{alg: relaxed w}
\EndFor\label{alg: pipg for end}
\If{\(\max\{\frac{1}{\alpha\rho}\norm{z^{k+1}-z^k}, \frac{1}{\beta\rho}\norm{w^{k+1}-w^k}\}\leq \epsilon\)}\label{eqn: PIPG test start}
\State{\Return{\(z^k\)}} 
\Else 
\If{\(\frac{1}{\beta\rho}\norm{w^{k+1}-w^k} > \epsilon\)}
\State{\Return{``Primal infeasible''}}
\EndIf
\If{\(\frac{1}{\alpha\rho}\norm{z^{k+1}-z^k} > \epsilon\)}
\State{\Return{``Dual infeasible''}}
\EndIf
\EndIf\label{eqn: PIPG test end}
\end{algorithmic}
\label{alg: PIPG}
\end{algorithm}

\begin{figure}
    \centering
    \begin{tikzpicture}
    \coordinate (a) at (-4, 0);
    \coordinate (b) at (0, 0);
    \coordinate (c) at (4, 0);
    
    \draw[ultra thick] (a) -- (c);
    \fill[blue] (a) circle [radius=2pt];
    \node[label=below:{\footnotesize \color{blue} $\xi^k$ }] at (a) {};

    \fill[red] (b) circle [radius=2pt];
    \node[label=below:{\footnotesize \color{red} $z^{k+1}$ }] at (b) {};

    \fill[Green] (c) circle [radius=2pt];
    
    \draw[decorate,decoration={brace,amplitude=10pt}]
($(a)+(0, 0.1)$) -- ($(c)+(0, 0.1)$) node [midway,yshift=0.5cm,above] {\footnotesize The range of \(\xi^{k+1}\) when $\rho\in[0, 2]$};

    \draw[decorate,decoration={brace,amplitude=5pt,mirror}]
($(a)+(0, -0.1)$) -- ($(b)+(0, -0.1)$) node [midway,yshift=-0.3cm,below] {\footnotesize $\rho\in[0, 1]$, interpolation};
    \draw[decorate,decoration={brace,amplitude=5pt,mirror}]
($(b)+(0, -0.1)$) -- ($(c)+(0, -0.1)$) node [midway,yshift=-0.3cm,below] {\footnotesize $\rho\in[1, 2]$, extrapolation};

    \end{tikzpicture}
    \caption{The range of \(\xi^{j+1}\) in Algorithm~\ref{alg: PIPG}.}
    \label{fig: extrapolation}
\end{figure}
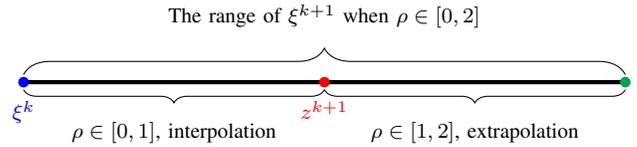

Previous results already showed that the iterations in Algorithm~\ref{alg: PIPG} ensure that, along certain averaged sequence of iterates, the primal-dual gap function converges to zero at the rate of \(O(1/k)\) \cite[Thm. 2]{chambolle2016ergodic}.
We will further show that for using a large enough iteration number \(k\) and a small enough tolerance \(\epsilon\),  Algorithm~\ref{alg: PIPG} automatically detects the primal and dual infeasibility of optimization~\eqref{opt: conic}, just like its special case where \(\rho=1\) \cite[Thm. 1]{yu2021proportional}.
 
We start with the following assumptions.

\begin{assumption}\label{asp: basic}
Matrix \(P\in\mathbb{R}^{n\times n}\) is symmetric and positive semidefinite, set \(\mathbb{D}\subset\mathbb{R}^n\) is closed and convex, cone \(\mathbb{K}\subset\mathbb{R}^m\) is closed and convex.
\end{assumption}

With these assumptions, the following lemma shows that line~\ref{alg: z} to line~\ref{alg: relaxed w} in Algorithm~\ref{alg: PIPG} are equivalent to the fixed-point iteration of an averaged operator.  

\begin{lemma}\label{lem: nonexpansive}
Suppose that Assumption~\ref{asp: basic} holds.
Let \(\zeta^j=\begin{bmatrix}
(\xi^j)^\top &
(\eta^j)^\top
\end{bmatrix}^\top\) for all \(j=1, 2, \ldots, k\) in Algorithm~\ref{alg: PIPG} and \(M=\begin{bsmallmatrix}
    \frac{1}{\alpha} I-P & -H^\top\\
    -H & \frac{1}{\beta}I
    \end{bsmallmatrix}\). If \(\alpha, \beta>0\), \(\alpha(\norm{P}+\beta\norm{H}^2)<1\), and \(\rho\in(0, 2)\), then line \ref{alg: z} to ~line\ref{alg: relaxed w} in Algorithm~\ref{alg: PIPG} is equivalent to \(\zeta^{j+1}=T(\zeta^j)\), where \(T:\mathbb{R}^{m+n}\to\mathbb{R}^{m+n}\) is a \(\frac{\rho}{2}\)-averaged operator with respect to the norm \(\norm{\cdot}_M\).
\end{lemma}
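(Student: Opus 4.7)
The plan is to identify one iteration of lines~\ref{alg: z}--\ref{alg: relaxed w} as a relaxed resolvent iteration of a maximal monotone operator in the metric \(M\), and then invoke the standard calculus of averaged operators. Let \(S:\mathbb{R}^{n+m}\to\mathbb{R}^{n+m}\) denote the map sending \(\zeta^j\) to \((z^{j+1},w^{j+1})\) as defined by lines~\ref{alg: z}--\ref{alg: w}. Then lines~\ref{alg: relaxed z}--\ref{alg: relaxed w} read \(\zeta^{j+1}=(1-\rho)\zeta^j+\rho S(\zeta^j)\), so \(T=(1-\rho)I+\rho S\), and it suffices to show that \(S\) is firmly nonexpansive (\ie, \(\tfrac{1}{2}\)-averaged) with respect to \(\norm{\cdot}_M\).

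The key computation is to rewrite \(S\) as a resolvent. From the normal-cone characterization of the projection in line~\ref{alg: z}, we have \(\alpha^{-1}(\xi^j-z^{j+1})-P\xi^j-H^\top\eta^j-q\in N_{\mathbb{D}}(z^{j+1})\); adding and subtracting \(Pz^{j+1}\) and \(H^\top w^{j+1}\) on the left side rewrites this as
\[
(\alpha^{-1}I-P)(\xi^j-z^{j+1})-H^\top(\eta^j-w^{j+1})\in Pz^{j+1}+q+H^\top w^{j+1}+N_{\mathbb{D}}(z^{j+1}).
\]
Similarly, using \(H(2z^{j+1}-\xi^j)=Hz^{j+1}-H(\xi^j-z^{j+1})\) and applying the normal-cone characterization to line~\ref{alg: w} yields
\[
-H(\xi^j-z^{j+1})+\beta^{-1}(\eta^j-w^{j+1})\in -Hz^{j+1}+g+N_{\mathbb{K}^\circ}(w^{j+1}).
\]
Stacking the two inclusions gives \(M(\zeta^j-S(\zeta^j))\in A(S(\zeta^j))\), where \(A\) is the block-structured operator
\[
A(z,w)=\bigl(Pz+q+H^\top w+N_{\mathbb{D}}(z)\bigr)\times\bigl(-Hz+g+N_{\mathbb{K}^\circ}(w)\bigr).
\]
Equivalently, \(S=(I+M^{-1}A)^{-1}\) is the resolvent of \(M^{-1}A\).

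It remains to check that \(M\succ 0\) and that \(A\) is maximal monotone. The first follows from a Schur-complement argument: the upper-left block \(\alpha^{-1}I-P\) is positive definite because \(\alpha\norm{P}<1\), and its Schur complement \(\beta^{-1}I-H(\alpha^{-1}I-P)^{-1}H^\top\) dominates \(\bigl(\beta^{-1}-\tfrac{\alpha\norm{H}^2}{1-\alpha\norm{P}}\bigr)I\), which is positive exactly under \(\alpha(\norm{P}+\beta\norm{H}^2)<1\). The second is routine: \(P\succeq 0\) makes \(z\mapsto Pz\) monotone, \(N_{\mathbb{D}}\) and \(N_{\mathbb{K}^\circ}\) are maximal monotone, and the coupling via \(H^\top\) and \(-H\) is skew-symmetric. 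Since \(M\succ 0\), the operator \(M^{-1}A\) is maximal monotone with respect to \(\langle\cdot,\cdot\rangle_M\), so its resolvent \(S\) is firmly nonexpansive (\ie, \(\tfrac{1}{2}\)-averaged) with respect to \(\norm{\cdot}_M\). The standard averaged-operator calculus then gives that \(T=(1-\rho)I+\rho S\) is \(\tfrac{\rho}{2}\)-averaged with respect to \(\norm{\cdot}_M\) for \(\rho\in(0,2)\).

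The main obstacle is the algebraic bookkeeping in the middle step: one must carefully inject \(Pz^{j+1}\) and \(H^\top w^{j+1}\) into the first inclusion and rewrite the extrapolation \(2z^{j+1}-\xi^j\) in the second so that the matrix \(M\) and the operator \(A\) emerge in canonical monotone-inclusion form. Everything downstream is standard monotone-operator theory.
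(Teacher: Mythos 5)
Your proof is correct, but it reaches the conclusion by a genuinely different route than the paper. The paper works entirely at the level of the projection variational inequality: it applies \cite[Thm. 3.16]{bauschke2017convex} at two generic points \(\zeta_1,\zeta_2\), sums the resulting four inequalities, uses \(P\succeq 0\) to absorb the cross term, and arrives at \(\langle y_1^+-y_2^+,\ \zeta_1-\zeta_2-y_1^++y_2^+\rangle_M\geq 0\) --- which is exactly firm nonexpansiveness of your map \(S\) in the \(M\)-metric, though the paper never names it as such --- and then handles the relaxation by substituting \(y_i^+=\zeta_i+\rho^{-1}(\zeta_i^+-\zeta_i)\) and completing squares to verify the defining inequality \eqref{def: avg op} directly with \(\gamma=\rho/2\). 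You instead identify \(S\) structurally as the resolvent \((I+M^{-1}A)^{-1}\) of a preconditioned maximal monotone operator, import firm nonexpansiveness of resolvents and the standard relaxation calculus for averaged operators, and your algebra checks out: the inclusion \(M(\zeta^j-S(\zeta^j))\in A(S(\zeta^j))\) is right, your Schur-complement bound shows \(M\succ 0\) precisely under \(\alpha(\norm{P}+\beta\norm{H}^2)<1\), and \(A\) is maximal monotone because its linear part has positive-semidefinite symmetric part \(\mathrm{diag}(P,0)\) and the normal-cone part is the subdifferential of an indicator (the sum theorem applies since the affine part has full domain). What your approach buys is conceptual transparency: it explains where \(M\) comes from (it is the preconditioner turning xPIPG into a relaxed proximal-point iteration for the primal--dual inclusion \(0\in A(z,w)\), whose zeros are exactly the optimality conditions \eqref{eqn: optimality}), and it makes the later normal-cone inclusions \eqref{eqn: zw normal cone} in the proof of Theorem~\ref{thm: inf detect} immediate. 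What the paper's computation buys is self-containedness --- it needs only the projection inequality and elementary algebra, with no appeal to maximal monotonicity, sum theorems, or resolvent theory. The only cosmetic caveat in your write-up is that the paper defines ``\(\gamma\)-averaged'' via the inequality \eqref{def: avg op} rather than via the representation \((1-\gamma)I+\gamma N\); since the paper cites \cite[Prop. 4.35]{bauschke2017convex} for the equivalence of the two, this is not a gap.
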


\begin{proof}
See Appendix~\ref{app: lem}.
\end{proof}

\begin{remark}\label{rem: alpha beta}
The conditions for \(\alpha\) and \(\beta\) in Lemma~\ref{lem: nonexpansive} are equivalent to the following conditions for some \(\omega>0\):
\begin{equation}
    \textstyle 0<\alpha<\frac{2}{\sqrt{\norm{P}^2+4\omega\norm{H}^2}+\norm{P}}, \enskip \beta=\omega\alpha.
\end{equation}
\end{remark}

Equipped with Lemma~\ref{lem: min dis} and Lemma~\ref{lem: nonexpansive}, we are now ready to present our main theoretical results. The following theorem shows that, for a large enough \(k\), the iteration between line~\ref{alg: pipg for start} and line~\ref{alg: pipg for end} in Algorithm~\ref{alg: PIPG} ensures either a primal-dual optimality condition, a primal infeasibility condition, or a dual infeasibility condition.

\begin{theorem}\label{thm: inf detect}
Suppose that Assumption~\ref{asp: basic} holds and the sequence \(\{z^j, w^j\}_{j\in\mathbb{N}}\) is computed recursively using line \ref{alg: z} to line~\ref{alg: relaxed w} in Algorithm~\ref{alg: PIPG} with \(\alpha, \beta>0\), \(\alpha(\norm{P}+\beta\norm{H}^2)<1\), and \(\rho\in(0, 2)\). There exists \(\overline{z}\in\rec \mathbb{D}\) and \(\overline{w}\in\mathbb{K}^\circ\) such that 
\begin{equation}\label{eqn: limit points}
    \lim\limits_{j\to\infty}z^j-z^{j-1}=\overline{z}, \enskip \lim\limits_{j\to\infty}w^j-w^{j-1}=\overline{w},
\end{equation}
and the following conditions hold:
\begin{subequations}\label{eqn: inf detect}
\begin{align}
    &\inf_{z\in\mathbb{D}}\,\langle Hz-g, \overline{w}\rangle=\textstyle\underset{y\in\mathbb{K}}{\sup}\,\langle y, \overline{w} \rangle+\frac{1}{\rho\beta}\norm{\overline{w}}^2,  \label{eqn: separating}\\
    & H\overline{z}\in\mathbb{K}, \enskip P\overline{z}=0, \enskip \langle q, \overline
    z\rangle=\textstyle-\frac{1}{\rho\alpha}\norm{\overline{z}}^2.\label{eqn: improving}
\end{align}
\end{subequations}
Furthermore, if \(\norm{\overline{w}}=\norm{\overline{z}}=0\), then
\begin{subequations}\label{eqn: distance limit}
\begin{align}
&\lim_{j\to\infty} d(Hz^j-g| N_{\mathbb{K}^\circ}(w^j))=0,\\
        &\lim_{j\to\infty} d(-Pz^j-q-H^\top w^j|N_{\mathbb{D}}(z^j))=0.
\end{align}    
\end{subequations}
\end{theorem}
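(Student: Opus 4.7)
The plan is to reduce the statement to the asymptotic behavior of the averaged operator $T$ from Lemma~\ref{lem: nonexpansive} and then extract the limit statements from the normal-cone characterizations of the two projection steps. First I would set $\zeta^j = \begin{bmatrix}(\xi^j)^\top & (\eta^j)^\top\end{bmatrix}^\top$, invoke Lemma~\ref{lem: nonexpansive} to write $\zeta^{j+1} = T(\zeta^j)$ with $T$ a $\tfrac{\rho}{2}$-averaged operator in $\norm{\cdot}_M$, and then apply Lemma~\ref{lem: min dis} to obtain a minimal-displacement vector $\overline{\zeta} = \begin{bmatrix}\overline{\xi}^\top & \overline{\eta}^\top\end{bmatrix}^\top$ such that $\xi^{j+1}-\xi^j \to \overline{\xi}$, $\eta^{j+1}-\eta^j\to \overline{\eta}$, and $\xi^j/j\to\overline{\xi}$, $\eta^j/j\to\overline{\eta}$.

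Next I would translate these limits from the extrapolated sequence to the primal/dual iterates. From lines~\ref{alg: relaxed z}--\ref{alg: relaxed w} we have $z^{j+1} = \xi^j + \tfrac{1}{\rho}(\xi^{j+1}-\xi^j)$ and $w^{j+1} = \eta^j + \tfrac{1}{\rho}(\eta^{j+1}-\eta^j)$, so differencing yields $z^{j+1}-z^j = (\xi^j-\xi^{j-1}) + \tfrac{1}{\rho}\bigl((\xi^{j+1}-\xi^j) - (\xi^j-\xi^{j-1})\bigr)$, and since the second-order increments vanish, $z^{j+1}-z^j\to\overline{\xi}$; set $\overline{z}\coloneqq\overline{\xi}$ and analogously $\overline{w}\coloneqq\overline{\eta}$. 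The same identities give $z^{j}/j\to\overline{z}$ and $w^j/j\to\overline{w}$. Because $z^j\in\mathbb{D}$ and $\mathbb{D}$ is closed convex, the standard characterization of the recession cone via sequences $z^j/\lambda_j$ with $\lambda_j\to\infty$ yields $\overline{z}\in\rec\mathbb{D}$; the analogous argument with $w^j\in\mathbb{K}^\circ$ and $\rec\mathbb{K}^\circ = \mathbb{K}^\circ$ gives $\overline{w}\in\mathbb{K}^\circ$.

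To derive \eqref{eqn: inf detect} I would pass to the limit in the optimality conditions for the two projections. Writing $z^{j+1} = \pi_{\mathbb{D}}[\xi^j - \alpha(P\xi^j+q+H^\top\eta^j)]$ as a variational inequality gives, for every $z\in\mathbb{D}$,
\begin{equation*}
\alpha\langle P\xi^j+q+H^\top\eta^j,\, z-z^{j+1}\rangle + \tfrac{1}{\rho}\langle \xi^{j+1}-\xi^j,\, z-z^{j+1}\rangle \;\ge\; 0,
\end{equation*}
while for $w^{j+1}$ the analogous normal-cone inclusion, after substituting $w^{j+1} = \eta^j + \tfrac{1}{\rho}(\eta^{j+1}-\eta^j)$, reads
\begin{equation*}
\beta\bigl(H(2z^{j+1}-\xi^j)-g\bigr) - \tfrac{1}{\rho}(\eta^{j+1}-\eta^j) \in \mathbb{K} \quad\text{and is orthogonal to } w^{j+1}.
\end{equation*}
Dividing the first inequality by $j$ and sending $j\to\infty$ (using the identified scaling limits) produces $\langle H\overline{z}, \overline{\eta}\rangle + \tfrac{1}{\rho\alpha}\norm{\overline{z}}^2 + \langle q,\overline{z}\rangle = 0$ together with $\langle P\overline{z}, z-\overline{z}\rangle \ge 0$ for every $z\in\mathbb{D}$ (whence $P\overline{z}=0$ follows from $\overline{z}\in\rec\mathbb{D}$ together with symmetry of $P$). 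Likewise, dividing the cone inclusion by $j$ yields $H\overline{z}\in\mathbb{K}$; combining this with the orthogonality $\langle w^{j+1}, \cdot\rangle = 0$ after dividing by $j$ and then substituting the $z$-identity back into the first variational inequality with arbitrary $z\in\mathbb{D}$ gives \eqref{eqn: separating}, and rearranging the limits yields \eqref{eqn: improving}.

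Finally, when $\norm{\overline{w}}=\norm{\overline{z}}=0$ the fixed-point residuals $\xi^{j+1}-\xi^j$ and $\eta^{j+1}-\eta^j$ vanish, so the two variational inequalities above, divided now by $\alpha$ and $\beta$ respectively rather than by $j$, become asymptotic inclusions $-Pz^{j+1}-q-H^\top\eta^j \in N_{\mathbb{D}}(z^{j+1}) + o(1)$ and $H(2z^{j+1}-\xi^j)-g \in N_{\mathbb{K}^\circ}(w^{j+1}) + o(1)$, from which \eqref{eqn: distance limit} follows after replacing $\eta^j, \xi^j, z^{j+1}$ by $w^j, z^j, z^j$ using the vanishing displacements. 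The main obstacle I anticipate is the careful bookkeeping in the third paragraph: one must combine the two variational inequalities, exploit the cone/orthogonality structure of $N_{\mathbb{K}^\circ}$, and extract precisely the quadratic terms $\tfrac{1}{\rho\beta}\norm{\overline{w}}^2$ and $-\tfrac{1}{\rho\alpha}\norm{\overline{z}}^2$ rather than looser inequalities, which requires tracking the $\tfrac{1}{\rho}$-weighted displacement terms through the limit rather than discarding them as lower-order.
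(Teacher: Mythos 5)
Your first, second, and final paragraphs track the paper's argument: Lemma~\ref{lem: nonexpansive} plus Lemma~\ref{lem: min dis} give the minimal-displacement vector, the relaxation identities transfer the limits from $(\xi^j,\eta^j)$ to $(z^j,w^j)$, and the normal-cone inclusions from the two projection steps give the distance bounds that yield \eqref{eqn: distance limit} once $\overline{z}=\overline{w}=0$. Your more elementary route to $\overline{z}\in\rec\mathbb{D}$ (from $z^j\in\mathbb{D}$ and $z^j/j\to\overline{z}$ via the sequential characterization of the recession cone) is valid and slightly simpler than the paper's.

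The gap is in your third paragraph, which is exactly where the real work lies. Passing to the limit in the variational inequality $\langle \hat{z}^{j}-z^{j},\, z-z^{j}\rangle\le 0$ after dividing by $j$ (or $j^2$) produces only \emph{one-sided} inequalities such as $\langle P\overline{z}+H^\top\overline{w},\overline{z}\rangle\le 0$; it cannot by itself ``produce'' the equalities $\langle q,\overline{z}\rangle=-\frac{1}{\rho\alpha}\norm{\overline{z}}^2$ and \eqref{eqn: separating}. Two ingredients are needed that your sketch does not supply: (i) the exact orthogonality relations $\langle \overline{z},P\overline{z}+H^\top\overline{w}\rangle=0$ and $\langle\overline{w},H\overline{z}\rangle=0$, which the paper obtains from the Moreau decomposition applied to the asymptotic projection identities \eqref{eqn: D rec}--\eqref{eqn: K polar rec polar}, and which are also what actually yields $P\overline{z}=0$ (your claimed derivation from ``$\langle P\overline{z},z-\overline{z}\rangle\ge 0$ for all $z\in\mathbb{D}$ plus symmetry'' does not go through; one needs $\langle\overline{z},P\overline{z}\rangle=0$ together with positive semidefiniteness of $P$); and (ii) the asymptotic support-function identities $\lim_j\frac{1}{j}\langle z^j,\hat{z}^j-z^j\rangle=\sigma_{\mathbb{D}}[-\alpha(P\overline{z}+H^\top\overline{w})]$ and $\lim_j\frac{1}{j}\langle w^j,\hat{w}^j-w^j\rangle=0$ in \eqref{eqn: supp D}--\eqref{eqn: supp K polar}, imported from Banjac et al. The latter are genuinely nontrivial because $\sigma_{\mathbb{D}}$ is only lower semicontinuous, so the upper bound does not follow by continuity from $(\hat{z}^j-z^j)/j\to-\alpha(P\overline{z}+H^\top\overline{w})$. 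With these in hand the paper runs a two-sided squeeze---inequality \eqref{eqn: inf eq 1} one way and \eqref{eqn: inf eq 6} the other---which forces \eqref{eqn: two bounds a} and \eqref{eqn: two bounds b} to hold with equality and delivers \eqref{eqn: separating} and \eqref{eqn: improving}. You correctly flag this bookkeeping as the obstacle, but the proposal as written does not contain the idea that overcomes it.
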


\begin{proof}
See Appendix~\ref{app: thm}.
\end{proof}

The two limits in \eqref{eqn: distance limit} imply that, as \(j\to\infty\), the following conditions hold: 
\begin{equation}\label{eqn: optimality}
Hz^j-g\in N_{\mathbb{K}^\circ}(w^j),\enskip -Pz^j-q-H^\top w^j\in N_{\mathbb{D}}(z^j).
\end{equation}
Assuming certain constraint qualification condition holds at \(z^j\), the conditions in \eqref{eqn: optimality} state that \(z^j\) satisfies the optimality condition of optimization~\eqref{opt: conic}; see \cite[Ex. 10.8, Ex. 11.46]{rockafellar2009variational} for details. Therefore, Theorem~\ref{thm: inf detect} shows that the iterates in Algorithm~\ref{alg: PIPG} either asymptotically satisfy the set of primal-dual optimality conditions in \eqref{eqn: distance limit}, or provide a proof of primal or dual infeasibility. In particular, if \(\norm{\overline{w}}\neq 0\), then \eqref{eqn: separating} implies the primal infeasibility condition in \eqref{eqn: primal inf}; if \(\norm{\overline{z}}\neq 0\), then \eqref{eqn: improving} implies the dual infeasibility conditions in \eqref{eqn: dual inf}.

\begin{comment}
Similar to the results in \cite{yu2021infeasibility}, Theorem~\ref{thm: inf detect} shows that the difference between two consecutive iterates generated by line \ref{alg: z}-\ref{alg: relaxed w} in Algorithm~\ref{alg: PIPG} always converges to a limit; depending on whether this limit is zero, Algorithm~\ref{alg: PIPG} either provides an optimal solution of optimization~\eqref{opt: conic}, or a proof of primal or dual infeasibility. Unlike the results in \cite{yu2021infeasibility}, Algorithm~\eqref{alg: PIPG} defines a more general class of iterates by introducing the extra relaxation step in its line~\ref{alg: relaxed z}-\ref{alg: relaxed w}.
\end{comment}
\section{Numerical examples}
\label{sec: numerical}

We demonstrate the application of xPIPG in optimal control problems and compare its performance against state-of-the-art open-source and commercial solvers. To this end, we consider the two-point boundary-value optimal control problem of a mechanical system composed of \(l\in\mathbb{N}\) oscillating masses \cite{wang2009fast,jerez2014embedded,yu2021proportional}; see Fig.~\ref{fig: mass-spring model} for an illustration. 
Given an initial state \(\hat{x}_0\in\mathbb{R}^{2l}\) and final time index \(\tau\in\mathbb{N}\), the discrete-time optimal control problem of the oscillating masses system is as follows: 
\begin{equation}\label{opt: opt control}
    \begin{array}{ll}
        \underset{u_{[0, \tau-1]}, x_{[0, \tau]}}{\mbox{minimize}}  & \textstyle \frac{1}{2}\sum_{t=0}^{\tau} x_t^\top Qx_t+\frac{1}{2}\sum_{t=0}^{\tau-1}u_t^\top Ru_t\\
        \mbox{subject to}  &  x_{t+1}=Ax_t+Bu_t,\enskip0\leq t\leq \tau-1,\\
        &u_t\in\mathbb{U},\enskip 0\leq t\leq \tau-1,\\
        & x_{t}\in\mathbb{X},\enskip 1\leq t\leq \tau-1,\\
        & x_0=\hat{x}_0,\enskip x_{\tau}= 0_{2l},
    \end{array}
\end{equation}where \(u_{[0, \tau-1]}\coloneqq \begin{bmatrix}
 u_0^\top & u_1^\top & \cdots & u_{\tau-1}^\top 
    \end{bmatrix}^\top\) and \(x_{[0, \tau]}\coloneqq \begin{bmatrix}
    x_0^\top & x_1^\top & \cdots & x_{\tau}^\top\end{bmatrix}^\top\) are the input and state trajectory respectively. In addition, we let
\begin{equation*}
\begin{aligned}
    &\textstyle A=\exp\left(\Delta\begin{bsmallmatrix}
    0_{l\times l} & I_l\\
    -L & 0_{l\times l}
    \end{bsmallmatrix}\right), \enskip Q=I_{2l}, \enskip R=I_l,\\
    & B=\textstyle\int_{0}^\Delta \exp\left(s\begin{bsmallmatrix}
    0_{l\times l} & I_l\\
    -L & 0_{l\times l}
    \end{bsmallmatrix}\right)\mathrm{d}s \begin{bsmallmatrix}
    0_{l\times l}\\
    I_l
    \end{bsmallmatrix},\\
    &\textstyle \mathbb{X}=\{x\in\mathbb{R}^{2l}| \norm{x}_\infty \leq \varrho_x\},\enskip  \mathbb{U}=\{u\in\mathbb{R}^l| \norm{u}_\infty\leq \varrho_u\}, 
\end{aligned}
\end{equation*} 
where \(\Delta=0.1\) is the sampling period of the dynamics, \(L\in\mathbb{R}^{l\times l}\) is a tridiagonal matrix: its diagonal entries are \(2\)'s, its subdiagonal ans superdiagonal entries are \(-1\)'s.  Optimization~\eqref{opt: opt control} is an instance of optimization~\eqref{opt: conic} where\begin{equation}\label{eqn: param}
\begin{aligned}
    & z = \begin{bmatrix}
    x_{[0,\tau]}^\top & u_{[0, \tau-1]}^\top 
    \end{bmatrix}^\top,\enskip P=I_{3\tau l+2l}, \enskip q=0_{3\tau l+2l}, \\ 
    &H =\begin{bmatrix} \begin{bmatrix}
    0_{2l\tau\times 2l} & I_{2l\tau}
    \end{bmatrix}-\begin{bmatrix}
    I_{\tau}\otimes A & 0_{2l\tau\times 2l}
    \end{bmatrix} & -I_{\tau}\otimes B \end{bmatrix},\\
     & g = 0_{2 l\tau},\enskip \mathbb{K}=\{0_{2\tau l}\},\enskip \mathbb{D}=\{\hat{x}_0\}\times \mathbb{X}^{\tau-1}\times \{0_{2l}\}\times \mathbb{U}^\tau.
\end{aligned}    
\end{equation}
Here \(\otimes\) denotes the Kronecker product, \(\mathbb{X}^{\tau-1}\) and \(\mathbb{U}^\tau\) denote the Cartesian product of \(\tau-1\) copies of \(\mathbb{X}\) and \(\tau\) copies of \(\mathbb{U}\), respectively.

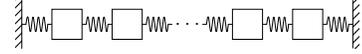
\begin{figure}[!ht]
\centering
\begin{adjustbox}{scale=0.4}

\begin{circuitikz}

\pattern[pattern=north east lines] (-0.2, -0.8) rectangle (0, 0.8);
\draw[thick] (0, -0.8) -- (0, 0.8);
\draw (0, 0) to[spring] (1, 0);
\draw (1, -0.5) rectangle (2, 0.5);
\draw (2, 0) to[spring] (3, 0);
\draw (3, -0.5) rectangle (4, 0.5);
\draw (4, 0) to[spring] (5, 0);

\node at (5.5, 0) {\huge $\ldots$};

\draw (6, 0) to[spring] (7, 0);
\draw (7, -0.5) rectangle (8, 0.5);
\draw (8, 0) to[spring] (9, 0);
\draw (9, -0.5) rectangle (10, 0.5);
\draw (10, 0) to[spring] (11, 0);

\pattern[pattern=north east lines] (11, -0.8) rectangle (11.2, 0.8);
\draw[thick] (11, -0.8) -- (11, 0.8);
\end{circuitikz}

\end{adjustbox}
\caption{The oscillating masses system}
\label{fig: mass-spring model}
\end{figure}

We apply Algorithm~\ref{alg: PIPG} (equipped with a C implenation, see \url{https://github.com/UW-ACL/pipg-demo/tree/master/xPIPG}) to instances of optimization~\eqref{opt: opt control} where \(\varrho_x=1\), \(\varrho_u=0.5\), \(\tau=20\) and \(l\in\{16, 32, 64, 128\}\). Furthermore, we sample the value of  \(\hat{x}_0\) from a normal distribution with mean \(\begin{bmatrix} \gamma 1_l^\top & 0_l^\top \end{bmatrix}^\top\) and standard deviation \(0.05 I_{2l}\), where \(\gamma=0.1\) for feasible instances and \(\gamma=0.8\) for infeasible instances of optimization~\eqref{opt: opt control}, respectively. Fig.~\ref{fig: rho} shows the convergence of xPIPG applied to one random problem instance where \(l=32\). xPIPG converges about twice as fast as PIPG; the latter is equivalent to xPIPG with \(\rho=1\). Furthermore, any value within the interval \([1.5, 1.9]\) provides a good choices for \(\rho\). These observations agree with those made for the Douglas-Rachford splitting methods \cite{eckstein1994parallel}. 

We compare the performance of Algorithm~\ref{alg: PIPG} (with \(\rho=1.6\)) against those of other state-of-the-art solvers, including OSQP \cite{stellato2020osqp}, SCS \cite{o2016conic,o2021operator}, ECOS \cite{domahidi2013ecos}, and MOSEK \cite{aps2019mosek}. Let \(z_a=\begin{bmatrix} \hat{x}_0^\top & -\varrho_x1_{2l(\tau-1)}^\top  & 0_{2l}^\top & -\varrho_u1_{\tau}^\top\end{bmatrix}^\top\) and \(z_b =\begin{bmatrix} \hat{x}_0^\top & \varrho_x 1_{2l(\tau-1)}^\top  & 0_{2l}^\top & \varrho_u 1_{l\tau}^\top\end{bmatrix}^\top\). For an feasible problem, we terminate Algorithm~\ref{alg: PIPG} when the following conditions hold:
\begin{equation}\label{eqn: feasible termination}
\begin{aligned}
    & \norm{Hz^j-g}_\infty \leq \epsilon_{\text{fea}},\\ &\norm{(z_b-z^j)\odot \max\{0_n, -Pz^j-q-H^\top w^j\}}_\infty\leq \epsilon_{\text{fea}},\\
    & \norm{(z_a-z^j)\odot \min\{0_n, -Pz^j-q-H^\top w^j\}}_\infty \leq \epsilon_{\text{fea}},
\end{aligned}
\end{equation}
where \(\max\) and \(\min\) are evaluated elementwise, and \(\odot\) is the elementwise product. One can verify that, when \(\epsilon_{\text{fea}}=0\), the conditions in \eqref{eqn: feasible termination} imply the optimality conditions in \eqref{eqn: optimality}; see \cite[Eqn. (9)]{stellato2020osqp}. We note that the \(\ell_\infty\) norm have also been used in the literature to measure the violation of other optimality conditions \cite{stellato2020osqp,o2021operator}. For an infeasible problem, we terminate Algorithm~\ref{alg: PIPG} when the following condition holds:
\begin{equation}\label{eqn: infeasible termination}
    \underset{z\in\mathbb{D}}{\inf}\langle Hz-g, w^{j+1}-w^j\rangle+\epsilon_{\text{inf}}>0.
\end{equation}
Since \(\mathbb{K}=\{0_{2\tau l}\}\), \eqref{eqn: infeasible termination} implies that \eqref{eqn: primal inf} holds up to a tolerance of \(\epsilon_{\text{inf}}\); furthermore, the infimum in \eqref{eqn: infeasible termination} has a closed form solution since set \(\mathbb{D}\) is a box. For the other solvers, we set their corresponding feasibility tolerance to \(\epsilon_{\text{fea}}\) and their primal and dual optimality tolerance to \(\epsilon_{\text{inf}}\). 

Tab.~\ref{tab: time} shows the computation time of different solvers, where the best computation time in each row is highlighted. We can see that Algorithm~\ref{alg: PIPG} has a clear advantage against open-source solvers--including OSQP, SCS, and ECOS--especially for large-scale and infeasible problems. The overall performance of Algorithm~\ref{alg: PIPG} is comparable to the highly optimized commercial solver MOSEK.

\begin{figure}[!ht]
\centering
  \begin{subfigure}{0.45\columnwidth}
  \includegraphics[width=\textwidth]{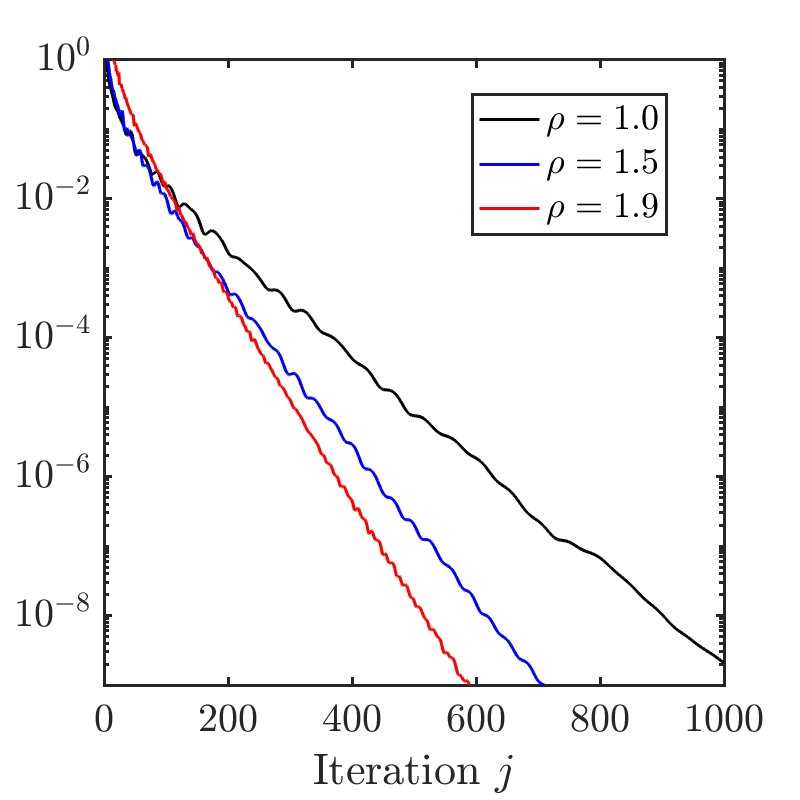}
  \caption{Feasible.} \label{fig: rho fea dual}
  \end{subfigure} 
  \begin{subfigure}{0.45\columnwidth}
  \includegraphics[width=\textwidth]{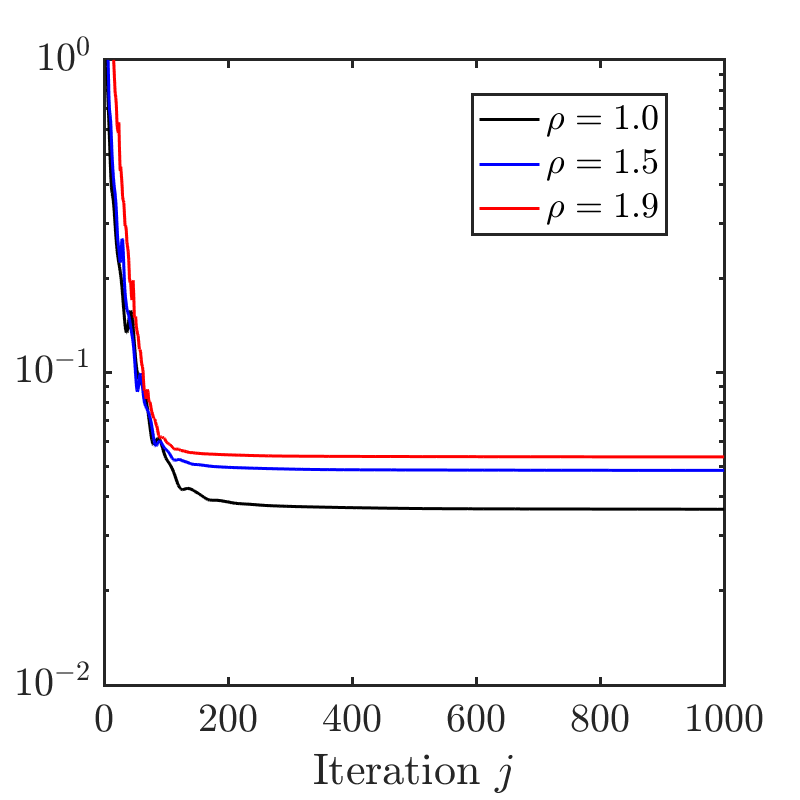}
  \caption{ Infeasible.} \label{fig: rho inf dual}
  \end{subfigure} 
  \caption{The asymptotic convergence of \(\frac{1}{\beta\rho}\norm{w^{j+1}-w^j}\) in Algorithm~\ref{alg: PIPG} when applied to an feasible (left) and infeasible (right) instance of optimization~\eqref{opt: opt control}. }
  \label{fig: rho}
\end{figure}

\begin{table}[!ht]
\caption{Comparison of the computation time (ms) of different solvers for instances of optimization~\eqref{opt: opt control} with randomly sampled values for \(\hat{x}_0\).}
\centering

\begin{subtable}[c]{0.48\textwidth}
    \centering
    %\scriptsize
    \caption{Feasible problems, averaged over 100 random \(\hat{x}_0\).}
   \begin{tabular}{ c|c|ccccc } 
\hline
\makecell{\(\epsilon_{\text{fea}}\)} & \(l\) & OSQP & SCS & ECOS & MOSEK & xPIPG \\
\hline
\multirow{4}{2.5em}{\(10^{-4}\)}
& 16 & \textbf{12.9} & 15.5 & 63.7 & 26.8 & 15.7 \\
& 32 & 60.5 & 63.5 & 284.9 & 96.7 & \textbf{29.5} \\
& 64 & 437.7 & 449.3 & 2120.2 & 175.9 & \textbf{145.9}\\
& 128 & 2788.7 & 2846.9  & 20080.3 & \textbf{316.0} & 401.8 \\
\hline
\multirow{4}{2.5em}{\(10^{-8}\)} & 16 & \textbf{22.4} & 25.1 & 79.3 & 33.0 & 31.8 \\
& 32 & 112.1 & 102.9 & 363.3 & 117.6 & \textbf{61.8} \\
& 64 & 658.8 & 636.8 & 2448.6 & \textbf{215.2} & 269.7 \\
& 128 & 3655.3 & 3560.2 & 24860.3 & \textbf{374.6} & 771.8 \\
\hline
\end{tabular}

\end{subtable}
\\[0.2cm]
\begin{subtable}[c]{0.48\textwidth}
\centering
%\scriptsize
\caption{Infeasible problems, averaged over 100 random \(\hat{x}_0\).}

\begin{tabular}{ c|c|ccccc } 
\hline
\makecell{\(\epsilon_{\text{inf}}\)} & \(l\) & OSQP & SCS & ECOS & MOSEK & xPIPG \\
\hline
\multirow{4}{2.5em}{\(10^{-4}\)} & 16 & 21.9 & 26.1 & 47.4 & 26.2 & \textbf{8.6} \\
& 32 & 97.4 & 116.9 & 209.1 & 103.6 & \textbf{16.1} \\
& 64 & 799.2 & 845.4 & 1614.6 & 185.6 & \textbf{82.1} \\
& 128 & 4545.5 & 5563.9 & 13900.3 & 345.1 & \textbf{233.4} \\
\hline
\multirow{4}{2.5em}{\(10^{-8}\)} & 16 & 33.6 & 45.7 & 71.6 & 51.3 & \textbf{13.6} \\
& 32 & 92.3 & 118.7 & 208.3 & 122.7 & \textbf{16.0}\\
& 64 & 713.7 & 821.1 & 1432.0 & 216.1 & \textbf{56.7} \\
& 128 & 4367.7 & 5202.7 & 13611.8 & 285.9 & \textbf{211.7} \\
\hline
\end{tabular}

\end{subtable}
\label{tab: time}
\end{table}

\section{Conclusions}
\label{sec: conclusions}

We introduced a first-order conic optimization method, xPIPG, that automatically detects primal or dual infeasibility in conic optimization. With an efficient C implementation, xPIPG outperforms many state-of-the-art conic optimization solvers especially for large-scale problems. 

However, our software implementation and numerical experiments are still limited to a special class of optimal control problems. Our future work direction includes preconditioning and parameter selection in xPIPG for ill-conditioned problems, such as those in the Maros-Meszaros dataset \cite{stellato2020osqp,o2021operator}. 

\appendices

\section{Proof of Lemma~\ref{lem: nonexpansive}}
\label{app: lem}
Let \(\xi_i\in\mathbb{R}^n\), \(\eta_i\in\mathbb{R}^m\), and
\begin{subequations}
\begin{align}
    z_i^+=&\pi_{\mathbb{D}}[\xi_i-\alpha(P\xi_i+q+H^\top \eta_i)],\label{eqn: zi proj}\\
    w_i^+=&\pi_{\mathbb{K}^\circ}[\eta_i+\beta(H(2z_i^+-\xi_i)-g)],\label{eqn: wi proj}\\
    \xi_i^+=&(1-\rho)\xi_i+\rho z_i^+,\enskip 
    \eta_i^+=(1-\rho)\eta_i+\rho w_i^+,\label{eqn: zw relaxed}
\end{align}
\end{subequations}
for \(i=1, 2\). By applying \cite[Thm. 3.16]{bauschke2017convex} to the projections in \eqref{eqn: zi proj} and \eqref{eqn: wi proj} we can show the following:
\begin{subequations}
\begin{align}
    &\textstyle \langle \frac{1}{\alpha}(z_1^+-\xi_1)+P\xi_1+q+H^\top \eta_1, z_2^+-z_1^+\rangle\geq 0,\label{eqn: z1}\\
    &\textstyle \langle \frac{1}{\alpha}( z_2^+-\xi_2)+P\xi_2+q+H^\top \eta_2, z_1^+-z_2^+\rangle\geq 0,\label{eqn: z2}\\
    &\textstyle \langle \frac{1}{\beta}(w_1^+-\eta_1)-H(2z_1^+-\xi_1)+g, w_2^+-w_1^+ \rangle\geq 0,\label{eqn: w1}\\
    &\textstyle \langle \frac{1}{\beta}(w_2^+-\eta_2)-H(2z_2^+-\xi_2)+g, w_1^+-w_2^+ \rangle\geq 0,\label{eqn: w2}
\end{align}
\end{subequations}
Summing up \eqref{eqn: z1} and \eqref{eqn: z2} gives
\begin{equation}\label{eqn: z1 & z2}
    \begin{aligned}
    &0\leq \textstyle \frac{1}{\alpha}\langle z_1^+-z_2^+, \xi_1-\xi_2-z_1^++z_2^+\rangle\\
    &-\langle z_1^+-z_2^+, P(\xi_1-\xi_2) \rangle +\langle z_1^+-z_2^+, H^\top (\eta_2-\eta_1) \rangle. 
    \end{aligned}
\end{equation}
Similarly, summing up \eqref{eqn: w1} and \eqref{eqn: w2} gives
\begin{equation}\label{eqn: w1 & w2}
    \begin{aligned}
    &\textstyle 0\leq \frac{1}{\beta}\langle w_1^+-w_2^+, \eta_1-\eta_2-w_1^++w_2^+\rangle\\
    &-\langle w_1^+-w_2^+, H (\xi_1-\xi_2-2z_1^++2z_2^+) \rangle.
    \end{aligned}
\end{equation}
Since matrix \(P\) is positive semidefinite, we must have \(\langle z_1^+-z_2^+, P(z_1^+-z_2^+)\rangle\geq 0\). Using this fact we can show the following
\begin{equation}\label{eqn: P}
\begin{aligned}
    &-\langle z_1^+-z_2^+, P(\xi_1-\xi_2)\rangle\\
    &\leq -\langle z_1^+-z_2^+, P(\xi_1-\xi_2-z_1^++z_2^+)\rangle.
\end{aligned}
\end{equation}
Let 
\begin{equation}
   \zeta_1 = \begin{bsmallmatrix}
    \xi_1\\
    \eta_1
    \end{bsmallmatrix}, \enskip \zeta_2 = \begin{bsmallmatrix}
    \xi_2\\
    \eta_2
    \end{bsmallmatrix}, \enskip y_1^+ = \begin{bsmallmatrix}
    z_1^+\\
    w_1^+
    \end{bsmallmatrix}, \enskip y_2^+ = \begin{bsmallmatrix}
    z_2^+\\
    w_2^+
    \end{bsmallmatrix}.
\end{equation}
Summing up \eqref{eqn: z1 & z2}, \eqref{eqn: w1 & w2}, and \eqref{eqn: P} gives
\begin{equation}\label{eqn: M inner}
    \begin{aligned}
        0\leq \langle y_1^+-y_2^+, \zeta_1-\zeta_2-y_1^++y_2^+ \rangle_M.
    \end{aligned}
\end{equation}
Next, equation \eqref{eqn: zw relaxed} imply that
\(\textstyle y_i^+=\zeta_i+\frac{1}{\rho}(\zeta_i^+-\zeta_i)\) for \(i=1, 2\). Substituting this relation into \eqref{eqn: M inner} gives
\begin{equation}
    \begin{aligned}
        &\textstyle 0\leq \frac{1}{\rho^2}\langle \zeta_1^+-\zeta_2^+, \zeta_1-\zeta_2-\zeta_1^++\zeta_2^+\rangle_M\\
        &\textstyle +\frac{1-\rho}{\rho^2}\langle \zeta_1-\zeta_2, \zeta_1^+-\zeta_2^+-\zeta_1+\zeta_2\rangle_M\\
        &=\textstyle \frac{1}{2\rho^2}\norm{\zeta_1-\zeta_2}_M^2-\frac{1}{2\rho^2}\norm{\zeta_1^+-\zeta_2^+}_M^2\\
        &\textstyle-\frac{1}{2\rho^2}\norm{\zeta_1-\zeta_2-\zeta_1^++\zeta_2^+}_M^2+\frac{1-\rho}{2\rho^2}\norm{\zeta_1^+-\zeta_2^+}_M^2\\
         &\textstyle- \frac{1-\rho}{2\rho^2}\norm{\zeta_1-\zeta_2}_M^2-\frac{1-\rho}{2\rho^2}\norm{\zeta_1-\zeta_2-\zeta_1^++\zeta_2^+}_M^2\\
        &=\textstyle \frac{1}{2\rho}\norm{\zeta_1-\zeta_2}_M^2- \frac{1}{2\rho}\norm{\zeta_1^+-\zeta_2^+}_M^2\\
        &+\textstyle \frac{(\rho-2)}{2\rho^2}\norm{\zeta_1-\zeta_2-\zeta_1^++\zeta_2^+}_M^2,
    \end{aligned}
\end{equation}
where the first step is due to completion of squares. Letting \(\rho=2\gamma\) for some \(\gamma\in(0, 1)\) in the inequality above gives \eqref{def: avg op}, which completes the proof.
\section{Proof of Theorem~\ref{thm: inf detect}}
\label{app: thm}

We will use the results in \cite[Lem. 3.2]{banjac2021asymptotic}. First, by combining Lemma~\ref{lem: nonexpansive} and Lemma~\ref{lem: min dis}, we can show that there exists \(\overline{z}\in\mathbb{R}^n\) and \(\overline{w}\in\mathbb{R}^m\) such that
\begin{subequations}\label{eqn: xi lim}
\begin{align}
    &\textstyle\lim\limits_{j\to\infty} \xi^{j}-\xi^{j-1}=\lim\limits_{j\to\infty} \frac{\xi^{j}}{j}=\overline{z},\\
    &\textstyle\lim\limits_{j\to\infty} \eta^{j}-\eta^{j-1}=\lim\limits_{j\to\infty} \frac{\eta^{j}}{j}=\overline{w}.
\end{align}
\end{subequations}
 Next, line~\ref{alg: relaxed z} and line~\ref{alg: relaxed w} in Algorithm~\ref{alg: PIPG} implies that \(\rho(z^j-\xi^j)=\xi^j-\xi^{j-1}\), \(z^j=\xi^{j-1}+\frac{1}{\rho}(\xi^j-\xi^{j-1})\), \(\rho(w^j-\eta^j)=\eta^j-\eta^{j-1}\), and \(w^j=\eta^{j-1}+\frac{1}{\rho}(\eta^j-\eta^{j-1})\). By combining these equalities with \eqref{eqn: xi lim}, we can show the following:
\begin{equation}\label{eqn: z lim}
\begin{aligned}
    &\textstyle\lim\limits_{j\to\infty} z^j-z^{j-1}=\lim\limits_{j\to\infty} \rho(z^j-\xi^{j-1})=\lim\limits_{j\to\infty}\frac{z^j}{j}=\overline{z},\\
    &\textstyle\lim\limits_{j\to\infty} w^j-w^{j-1}=\lim\limits_{j\to\infty} \rho(w^j-\eta^{j-1})=\lim\limits_{j\to\infty}\frac{w^j}{j}=\overline{w}.
\end{aligned}
\end{equation}

 The above equation established \eqref{eqn: limit points}. Our next step is to prove the conditions in \eqref{eqn: distance limit}. By applying 
\cite[Thm. 27.4]{rockafellar2015convex} to line~\ref{alg: z} and line~\ref{alg: w} in Algorithm~\ref{alg: PIPG},
we can show the following:\begin{subequations}\label{eqn: zw normal cone}
\begin{align}
    &\textstyle \frac{1}{\alpha}(\xi^j-z^{j+1})-P\xi^j-q-H^\top\eta^j\in N_{\mathbb{D}}(z^{j+1}),\label{eqn: z normal cone}\\
    &\textstyle\frac{1}{\beta}(\eta^j-w^{j+1})+H(2z^{j+1}-\xi^j)-g\in N_{\mathbb{K}^\circ}(w^{j+1}).\label{eqn: w normal cone}
\end{align}
\end{subequations}
 According to the definition in \eqref{eqn: distance func}, the distance from point \(-Pz^{j+1}-q-Hw^{j+1}\) to the set \(N_{\mathbb{D}}(z^{j+1})\) is no larger than the distance from \(-Pz^{j+1}-q-Hw^{j+1}\) to \(\frac{1}{\alpha}(\xi^j-z^{j+1})-P\xi^j-q-H^\top\eta^j\); the reason is because \(\frac{1}{\alpha}(\xi^j-z^{j+1})-P\xi^j-q-H^\top\eta^j\) is, according to \eqref{eqn: z normal cone}, in set \(N_{\mathbb{D}}(z^{j+1})\). In other words, the following inequality holds:
\begin{equation}\label{eqn: z dist bound}
    \begin{aligned}
    &d(-Pz^{j+1}-q-Hw^{j+1}|N_{\mathbb{D}}(z^{j+1}))\\
    &\textstyle\leq \norm{(\frac{1}{\alpha}I-P)(z^{j+1}-\xi^j)-H^\top(w^{j+1}-\eta^j)}.
    \end{aligned}
\end{equation}
Similarly, we can show
\begin{equation}\label{eqn: w dist bound}
    \begin{aligned}
    &d(Hz^{j+1}-g|N_{\mathbb{K}^\circ}(w^{j+1}))\\
    &\textstyle \leq \norm{\frac{1}{\beta}(w^{j+1}-\eta^j)-H(z^{j+1}-\xi^j)}.
    \end{aligned}
\end{equation}
By combining the limits in \eqref{eqn: z lim}, \eqref{eqn: z dist bound}, and \eqref{eqn: w dist bound}, we can prove the limits in \eqref{eqn: distance limit}, assuming \(\norm{\overline{z}}=\norm{\overline{w}}=0\).

 We now prove the conditions in \eqref{eqn: inf detect}. To this end, let 
\begin{equation}\label{eqn: zw hat}
\begin{aligned}
    &\hat{z}^j=\xi^{j-1}-\alpha(P\xi^{j-1}+q+H^\top \eta^{j-1}),\\
    &\hat{w}^j=\eta^{j-1}+\beta(H(2z^j-\xi^{j-1})-g).
\end{aligned}
\end{equation}
 Dividing the above two equalities by \(j\) then letting \(j\to\infty\) gives the following:
\begin{equation}\label{eqn: hat limit}
    \textstyle\lim\limits_{j\to\infty}\frac{\hat{z}^j}{j}=\overline{z}-\alpha(P\overline{z}+H^\top \overline{w}),\enskip \textstyle\lim\limits_{j\to\infty}\frac{\hat{w}^j}{j}=\overline{w}+\beta H\overline{z},
\end{equation}
 where we again used \eqref{eqn: z lim}. Since \(z^j=\pi_{\mathbb{D}}[\hat{z}^j]\),  \(w^j=\pi_{\mathbb{K}^\circ}[\hat{w}^j]\) (due to line \ref{alg: z} and line \ref{alg: w} in Algorithm~\ref{alg: PIPG}), by using the results in \cite[Lem. 3.2]{banjac2021asymptotic} we can show the following:
\begin{subequations}
\begin{align}
    &\hat{z}^j-z^j\in(\rec \mathbb{D})^\circ, \enskip \hat{w}^j-w^j\in \mathbb{K}, \label{eqn: proj diff}\\
    &\lim\limits_{j\to\infty}\textstyle\frac{z_j}{j}=\overline{z}=\pi_{\rec\mathbb{D}}[\overline{z}-\alpha (P\overline{z}+H^\top \overline{w})],\label{eqn: D rec}\\
    &\lim\limits_{j\to\infty}\textstyle\frac{w_j}{j}=\overline{w}=\pi_{\mathbb{K}^\circ}[\overline{w}+\beta H\overline{z}],\label{eqn: K polar rec}\\
    &\begin{aligned}
    \lim\limits_{j\to\infty}\textstyle\frac{\hat{z}^j-z^j}{j}&=\pi_{(\rec\mathbb{D})^\circ}[\overline{z}-\alpha (P\overline{z}+H^\top \overline{w})]\\
    &=-\alpha(P\overline{z}+H^\top \overline{w}),
    \end{aligned}\label{eqn: D rec polar}\\
    & \lim\limits_{j\to\infty}\textstyle\frac{\hat{w}^j-w^j}{j}=\pi_{\mathbb{K}}[\overline{w}+\beta H \overline{z}]=\beta H\overline{z},\label{eqn: K polar rec polar}\\
    &\lim\limits_{j\to\infty}\textstyle\frac{\langle z^j,\hat{z}^j-z^j\rangle}{j}=\sigma_{\mathbb{D}}[-\alpha(P\overline{z}+H^\top \overline{w})],\label{eqn: supp D}\\
    &\lim\limits_{j\to\infty}\textstyle\frac{\langle w^j,\hat{w}^j-w^j\rangle}{j}=\sigma_{\mathbb{K}^\circ}[\beta H\overline{z}]=0,\label{eqn: supp K polar}
\end{align}
\end{subequations}
where we used the fact that \(\rec\mathbb{K}^\circ=\mathbb{K}^\circ\) and \((\mathbb{K}^\circ)^\circ=\mathbb{K}\); we also used \eqref{eqn: z lim} and \eqref{eqn: hat limit} in \eqref{eqn: D rec polar} and \eqref{eqn: K polar rec polar}. Notice that
\eqref{eqn: D rec}, \eqref{eqn: K polar rec}, and \eqref{eqn: K polar rec polar} directly implies the following
\begin{equation}\label{eqn: zw rec cone}
    \overline{z}\in\rec \mathbb{D},\enskip \overline{w}\in\mathbb{K}^\circ, \enskip H\overline{z}\in\mathbb{K}.
\end{equation}
By applying \cite[Thm. 6.30]{bauschke2017convex} to the projections in \eqref{eqn: D rec}, \eqref{eqn: D rec polar}, \eqref{eqn: K polar rec}, and \eqref{eqn: K polar rec polar} we can show the following
\begin{subequations}
\begin{align}
    &\langle \overline{z}, P\overline{z}+H^\top \overline{w}\rangle=0,\\
    &\langle \overline{w}, H\overline{z} \rangle=0.
    \label{eqn: complementary}
\end{align}
\end{subequations}
Combining the above two equalities with the assumption that \(P\) is positive semidefinite, we can conclude that
\begin{equation}\label{eqn: Pz=0}
    P\overline{z}=0.
\end{equation}
Next, we proceed to prove the conditions in \eqref{eqn: inf detect} (other than those in \eqref{eqn: zw rec cone} and \eqref{eqn: Pz=0}) using two argument. In the first argument, we start with the following:
\begin{subequations}
\begin{align}
    &\langle w^j, H\overline{z} \rangle\leq \langle -P\xi^{j-1}, \overline{z} \rangle,\label{eqn: bd sup 1}\\
    & \langle -Hz^j,  \overline{w}\rangle\leq \sigma_{\mathbb{D}}(-H^\top \overline{w}),\label{eqn: bd sup 2}
\end{align}
\end{subequations}
where \eqref{eqn: bd sup 1} is due to \eqref{eqn: zw rec cone}, \eqref{eqn: Pz=0}, and the definition of polar cone in \eqref{eqn: polar cone}; \eqref{eqn: bd sup 2} is due to \eqref{eqn: support}.

Furthermore, by using \eqref{eqn: zw hat}, \eqref{eqn: proj diff}, \eqref{eqn: D rec}, \eqref{eqn: K polar rec}, and the definition in \eqref{eqn: polar cone} we can show the following\begin{subequations}
\begin{align}
&\textstyle \langle \frac{1}{\alpha}(\xi^{j-1}-z^j)-P\xi^{j-1}-q-H^\top \eta^{j-1}, \overline{z}\rangle \leq 0,\label{eqn: bd sup 3}\\
&\textstyle \langle \frac{1}{\beta}(\eta^{j-1}-w^j)+H(2z^j-\xi^{j-1})-g, \overline{w} \rangle\leq 0.\label{eqn: bd sup 4}
\end{align}
\end{subequations}

By first summing up both sides of \eqref{eqn: bd sup 1} and \eqref{eqn: bd sup 3}, then letting \(j\to\infty\), we can show the following
\begin{equation}\label{eqn: two bounds a}
    \textstyle\frac{1}{\rho\alpha}\norm{\overline{z}}^2+\langle q, \overline{z}\rangle\geq \textstyle\frac{1}{\rho}\langle H\overline{z}, \overline{w}\rangle=0,
\end{equation}
where we also used \eqref{eqn: z lim} and\eqref{eqn: complementary}. Similarly, by first summing both sides of \eqref{eqn: bd sup 2} and \eqref{eqn: bd sup 4}, then letting \(j\to\infty\), we can show the following
\begin{equation}
\begin{aligned}
&\textstyle \frac{1}{\rho\beta}\norm{\overline{w}}^2+\langle g, \overline{w}\rangle+\sigma_{\mathbb{D}}(-H^\top \overline{w})\geq \textstyle\frac{1}{\rho}\langle H\overline{z}, \overline{w}\rangle=0,
\end{aligned}\label{eqn: two bounds b}
\end{equation}
where the last step is due to \eqref{eqn: complementary}.

Finally, by summing up both sides of \eqref{eqn: two bounds a} and \eqref{eqn: two bounds b}, we obtain the following intermediate step: 
\begin{equation}\label{eqn: inf eq 1}
    \textstyle\sigma_{\mathbb{D}}(-H^\top \overline{w})+\langle q, \overline{z}\rangle+\langle g, \overline{w}\rangle+\frac{1}{\rho\alpha}\norm{\overline{z}}^2+\frac{1}{\rho\beta}\norm{\overline{w}}^2\geq 0.
\end{equation}

Inequality \eqref{eqn: inf eq 1} completes our first argument. In the second argument, we will show that the inequality in \eqref{eqn: inf eq 1} actually holds as an equality, using the following three steps. First, combining \eqref{eqn: z lim} and \eqref{eqn: hat limit} gives the following
\begin{equation}
\begin{aligned}
    &\lim\limits_{j\to\infty}\textstyle\frac{1}{\alpha j}\langle z^j-\hat{z}^j, z^j-\xi^{j-1} \rangle=\frac{1}{\rho}\langle P\overline{z}+H^\top \overline{w}, \overline{z}\rangle=0,\\
    &\lim\limits_{j\to\infty}\textstyle\frac{1}{\beta j}\langle w^j-\hat{w}^j, w^j-\eta^{j-1} \rangle=\frac{1}{\rho}\langle H\overline{z}, \overline{w}\rangle=0.
\end{aligned}\label{eqn: inf eq 2}
\end{equation}
where the last step in the above two inequalities are due to \eqref{eqn: complementary} and \eqref{eqn: Pz=0}.  Second, by using \eqref{eqn: zw hat} we can show the following
\begin{equation}\label{eqn: inf eq 3}
    \begin{aligned}
       &\textstyle\frac{1}{\alpha j}\langle z^j-\hat{z}^j, \xi^{j-1}\rangle\\
       &\geq \textstyle \frac{1}{j}\langle \frac{1}{\alpha}( z^j-\xi^{j-1})+q+H^\top \eta^{j-1}, \xi^{j-1} \rangle,
    \end{aligned}
\end{equation}
where we used the assumption that \(P\) is positive semidefinite. Similarly, by using \eqref{eqn: zw hat} we can show the following
\begin{equation}\label{eqn: inf eq 4}
    \begin{aligned}
    &\textstyle \frac{1}{\beta j}\langle w^j-\hat{w}^j, \eta^{j-1} \rangle\\
    &=\textstyle \frac{1}{j}\langle \frac{1}{\beta}(w^j-\eta^{j-1})-H(2z^j-\xi^{j-1})+g, \eta^{j-1} \rangle.
    \end{aligned}
\end{equation}
Third, by first summing up the both sides of \eqref{eqn: inf eq 3}, and \eqref{eqn: inf eq 4}, then letting \(j\to\infty\), we obtain the following
\begin{equation}\label{eqn: sum two limits}
\begin{aligned}
&\textstyle \frac{1}{\alpha\rho}\norm{\overline{z}}^2+\frac{1}{\beta\rho}\norm{\overline{w}}^2+\langle q, \overline{z}\rangle+\langle g, \overline{w}\rangle-\frac{2}{\rho}\langle H\overline{z}, \overline{w}\rangle\\
&\leq \lim\limits_{j\to\infty}\textstyle\frac{1}{\alpha j}\langle z^j-\hat{z}^j, \xi^{j-1}\rangle+\lim\limits_{j\to\infty}\textstyle\frac{1}{\beta j}\langle w^j-\hat{w}^j, \eta^{j-1}\rangle,
\end{aligned}
\end{equation}
Notice the two limits in \eqref{eqn: sum two limits} always exist, due to \eqref{eqn: supp D}, \eqref{eqn: supp K polar}, and \eqref{eqn: inf eq 2}. By combining \eqref{eqn: sum two limits} together with \eqref{eqn: inf eq 2}, \eqref{eqn: supp D}, \eqref{eqn: supp K polar}, and \eqref{eqn: complementary} we obtain can the following
\begin{equation}\label{eqn: inf eq 6}
    \textstyle\sigma_{\mathbb{D}}(-H^\top \overline{w})+\langle q, \overline{z}\rangle+\langle g, \overline{w}\rangle+\frac{1}{\rho\alpha}\norm{\overline{z}}^2+\frac{1}{\rho\beta}\norm{\overline{w}}^2\leq 0,
\end{equation}
where we also used \eqref{eqn: Pz=0} and the fact that \(\sigma_{\mathbb{D}}(-\alpha H^\top \overline{w})=\alpha\sigma_{\mathbb{D}}(- H^\top \overline{w})\) for any \(\alpha\geq 0\). By combining \eqref{eqn: inf eq 1} and \eqref{eqn: inf eq 6} we conclude that inequality \eqref{eqn: inf eq 1} holds as an equality. Consequently, the inequalities in \eqref{eqn: two bounds a} and \eqref{eqn: two bounds b} must also hold as equalities, \ie,
\begin{equation}\label{eqn: two inf}
    \textstyle\langle q, \overline{z}\rangle=-\frac{1}{\rho\alpha}\norm{\overline{z}}^2,\enskip \textstyle \sigma_{\mathbb{D}}(-H^\top \overline{w})+\langle g, \overline{w}\rangle=-\frac{1}{\rho\beta}\norm{\overline{w}}^2.
\end{equation}
We now finished our second argument. Finally, by combining \eqref{eqn: zw rec cone}, the second equality in \eqref{eqn: two inf}, and the fact that \(\langle \overline{w}, y\rangle\leq 0\) for all \(y\in\mathbb{K}\)--which is due to \(\overline{w}\in\mathbb{K}^\circ\) in \eqref{eqn: zw rec cone}--we can obtain \eqref{eqn: separating}. By combining \eqref{eqn: zw rec cone}, \eqref{eqn: Pz=0}, and the first equality in \eqref{eqn: two inf}, we can obtain \eqref{eqn: improving}.
%\addtolength{\textheight}{-12cm}   % This command serves to balance the column lengths
                                  % on the last page of the document manually. It shortens
                                  % the textheight of the last page by a suitable amount.
                                  % This command does not take effect until the next page
                                  % so it should come on the page before the last. Make
                                  % sure that you do not shorten the textheight too much.

%%%%%%%%%%%%%%%%%%%%%%%%%%%%%%%%%%%%%%%%%%%%%%%%%%%%%%%%%%%%%%%%%%%%%%%%%%%%%%%%

% %\section*{APPENDIX}
% \section*{ACKNOWLEDGMENT} Some acknowledgements.

\bibliographystyle{IEEEtran}
\bibliography{IEEEabrv,reference}

\end{document}